\documentclass[12pt]{article}
\usepackage[utf8]{inputenc}
\usepackage{amssymb,amsmath,amsfonts,amsthm,amscd,latexsym,indentfirst,verbatim,xcolor}
\usepackage[T2A]{fontenc}
\usepackage{geometry}
\def\dbl{\lbrace\kern-3pt\lbrace}
\def\dbr{\rbrace\kern-3pt\rbrace}

\def\charr{\textrm{char}\,}
\def\Imm{\textrm{Im}\,}

\def\Aut{\textrm{Aut}}

\def\id{\operatorname{id}}

\newcommand{\lc}{\operatorname{lc}}

\theoremstyle{plain}
\newtheorem{theorem}{Theorem}[section]
\newtheorem{lemma}[theorem]{Lemma}

\newtheorem*{conjecture*}{Conjecture}

\newtheorem{proposition}[theorem]{Proposition}

\theoremstyle{definition}
\newtheorem{definition}[theorem]{Definition}

\begin{document}
\sloppy
\hfill{16W99 (MSC2020)}

\begin{center}
{\Large
Injective Rota--Baxter operators of weight~1 on $F[x]$}

\smallskip

Vsevolod Gubarev
\end{center}

\begin{abstract}
We describe all injective Rota--Baxter operators~$R$ of weight~1 on the polynomial algebra
$F[x]$. 
When $\charr F = p>0$, the only one is $R=-\id$. 
When $\charr F = 0$, we have either $R = -\id$ or, up to conjugation with automorphisms of $F[x]$, $R(1) = x$, and $R$ is uniquely defined via this equality.
Together with the known weight-zero case, this completes the classification
of injective Rota--Baxter operators of any weight on $F[x]$.

{\it Keywords}:
Rota--Baxter operator, polynomial ring.
\end{abstract}

\section{Introduction}

Rota--Baxter operators on algebras have been studied since 1960, when G. Baxter~\cite{Baxter} gave the general definition to provide a rigorous algebraic foundation for
Spitzer's identity from fluctuation theory.

Since then, numerous connections and applications have been discovered; see, for example,~\cite{GuoMonograph}.
Most classification results of Rota--Baxter operators are devoted to finite-dimensional algebras.
The study of Rota--Baxter operators on the polynomial algebra $F[x]$ began relatively recently.

In 2015, S.~H.~Zheng, L.~Guo, and M.~Rosenkranz described~\cite{Monom2} all injective monomial Rota--Baxter operators of weight zero on $F[x]$ over a field of characteristic zero; a~monomial operator is an operator which maps each monomial to a monomial with some coefficient. They proved
that up to a~constant term every monomial injective Rota--Baxter operator on $F[x]$ acts on each monomial as a~composition of the multiplication $l_r$ by a fixed nonzero polynomial $r$
and a~formal integration $J_a$ at some fixed point~$a$. Moreover, S.~H.~Zheng, L.~Guo, and M.~Rosenkranz formulated a conjecture that this result holds true for any injective Rota--Baxter operator on $\mathbb{R}[x]$.

In 2021, the author and A. Perepechko confirmed the conjecture of 
S.~H.~Zheng, L.~Guo, and M.~Rosenkranz over any field of characteristic zero~\cite{InjectiveZero}.
Since there are no nonzero Rota--Baxter operators of weight~0 on $F[x]$, when $\charr F>0$~\cite{Khodzitskii3}, this finished the description of injective Rota--Baxter operators of weight~0 on $F[x]$ over any field~$F$.

The current work is devoted to the case of nonzero weight.
This case is quite different, since now we deal with two Rota--Baxter operators of the same weight~$\lambda\neq0$ on~$F[x]$: $R$ and $R'=-R-\lambda \id$.
We prove (over any field) that if only one of them is injective, then such operator is trivial, i.\,e. $R = -\id$.
The remaining case, when both $R$ and $R'$ are injective, can occur only in characteristic zero, and this case gives, up to conjugation with automorphisms of $F[x]$, only one additional operator $R$ such that $R(1) = x$ and the action $R(x^k)$ is defined via Faulhaber polynomials.

Therefore we obtain the complete description of injective Rota--Baxter operators on $F[x]$.
The core part of the proof, Proposition~\ref{prop:Main}, was obtained by Aristotle AI~\cite{Aristotle}.

Monomial Rota--Baxter operators on $F[x]$ were studied  in~\cite{MonomNonunital,Monom}.
In a series of works, A. Khodzitskii studied monomial Rota---Baxter operators on the polynomial algebra $F[x,y]$~\cite{Khodzitskii,Khodzitskii2}.

\section{Preliminaries}

\begin{definition}
A linear operator $R$ defined on an algebra $A$
is called a Rota--Baxter operator (RB-operator, for short) of weight $\lambda$, where $\lambda\in F$, if the identity
\begin{equation}\label{RB}
R(f)R(g) = R( R(f)g + fR(g) )+\lambda R(fg)
\end{equation}
holds for every $f,g\in A$.
\end{definition}

\begin{lemma}\label{lemma:phi}
Given an RB-operator $R$ of weight $\lambda$,

(a) the operator $R':=-R-\lambda\id$ is an RB-operator of weight $\lambda$,

(b) the operator $\lambda^{-1}R$ is an RB-operator of weight 1, provided $\lambda\neq0$.
\end{lemma}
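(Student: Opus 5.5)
Both parts follow by substituting the new operator into identity~\eqref{RB}, expanding both sides by bilinearity, and checking that they agree after using the Rota--Baxter identity for $R$ once. No earlier result is needed beyond the definition.

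For (a), put $R'=-R-\lambda\id$ and expand the left-hand side:
\[
R'(f)R'(g)=R(f)R(g)+\lambda\bigl(R(f)g+fR(g)\bigr)+\lambda^2 fg .
\]
On the right-hand side, first note
\[
R'(f)g+fR'(g)=-\bigl(R(f)g+fR(g)\bigr)-2\lambda fg .
\]
Applying $R'=-R-\lambda\id$ to this expression gives
\[
R\bigl(R(f)g+fR(g)\bigr)+2\lambda R(fg)+\lambda\bigl(R(f)g+fR(g)\bigr)+2\lambda^2 fg .
\]
Adding the term $\lambda R'(fg)=-\lambda R(fg)-\lambda^2 fg$ yields
\[
R\bigl(R(f)g+fR(g)\bigr)+\lambda R(fg)+\lambda\bigl(R(f)g+fR(g)\bigr)+\lambda^2 fg .
\]
By~\eqref{RB} for $R$, the first two summands equal $R(f)R(g)$. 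The whole expression is therefore exactly the expansion of $R'(f)R'(g)$ above, so $R'$ satisfies~\eqref{RB} with weight $\lambda$.

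For (b), put $P=\lambda^{-1}R$. Then $P(f)P(g)=\lambda^{-2}R(f)R(g)$, while
\[
P\bigl(P(f)g+fP(g)\bigr)+P(fg)=\lambda^{-2}R\bigl(R(f)g+fR(g)\bigr)+\lambda^{-1}R(fg).
\]
Multiplying~\eqref{RB} for $R$ by $\lambda^{-2}$ gives precisely this equality, so $P$ is an RB-operator of weight~1.

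There is no real obstacle. The only step needing care is the sign and coefficient bookkeeping in (a): one must check that the $\lambda^2 fg$ terms ($+2\lambda^2-\lambda^2$) and the $\lambda R(fg)$ terms ($+2\lambda-\lambda$) cancel as claimed.
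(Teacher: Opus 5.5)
Your verification is correct. In (a) the $R(fg)$ and $fg$ coefficients reduce to $\lambda$ and $\lambda^2$ exactly as you track them. Part (b) is just~\eqref{RB} for $R$ multiplied by $\lambda^{-2}$. The paper states this lemma as a standard fact without proof, and your direct expansion is the routine check it relies on. Since it uses only bilinearity of the product, it holds in any algebra, as the paper's phrasing requires.
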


\begin{lemma}\label{lem:Aut}
Given an algebra $A$, an RB-operator $P$ on $A$ of weight $\lambda$,
and $\psi\in\Aut(A)$, the operator $P^{(\psi)} = \psi^{-1}P\psi$
is an RB-operator of weight~$\lambda$ on~$A$.
\end{lemma}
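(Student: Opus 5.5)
The statement is Lemma~\ref{lem:Aut}: for $\psi\in\Aut(A)$, the conjugate $P^{(\psi)}=\psi^{-1}P\psi$ is again an RB-operator of weight~$\lambda$. The plan is to check the identity~\eqref{RB} for $P^{(\psi)}$ directly. The argument uses only two facts: $\psi$ and $\psi^{-1}$ are linear and multiplicative, and $P$ satisfies~\eqref{RB}.

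Linearity of $P^{(\psi)}$ is immediate, since it is a composition of linear maps. Now fix $f,g\in A$ and set $u=\psi(f)$, $v=\psi(g)$.

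First, rewrite the left-hand side using that $\psi^{-1}$ is multiplicative:
\[
P^{(\psi)}(f)P^{(\psi)}(g)=\psi^{-1}(P(u))\,\psi^{-1}(P(v))=\psi^{-1}\bigl(P(u)P(v)\bigr).
\]

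Second, apply~\eqref{RB} for $P$ to the pair $u,v$:
\[
\psi^{-1}\bigl(P(u)P(v)\bigr)=\psi^{-1}\Bigl(P\bigl(P(u)v+uP(v)\bigr)+\lambda P(uv)\Bigr).
\]

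Third, express each term inside through $f$ and $g$. Since $P(u)=\psi\bigl(P^{(\psi)}(f)\bigr)$ and $\psi$ is multiplicative,
\[
P(u)v=\psi\bigl(P^{(\psi)}(f)\,g\bigr).
\]
In the same way, $uP(v)=\psi\bigl(fP^{(\psi)}(g)\bigr)$ and $uv=\psi(fg)$.

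Substituting these and using linearity of $P$ and $\psi^{-1}$, the right-hand side becomes
\[
\psi^{-1}P\psi\bigl(P^{(\psi)}(f)g+fP^{(\psi)}(g)\bigr)+\lambda\,\psi^{-1}P\psi(fg),
\]
which is exactly the right-hand side of~\eqref{RB} for $P^{(\psi)}$.

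The only point needing care is writing every product as $\psi$ of a product, so that $\psi^{-1}$ and $\psi$ cancel correctly. Beyond that there is no real obstacle; this is a routine verification.
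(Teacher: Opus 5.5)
Your verification is correct. It uses only that $\psi$ and $\psi^{-1}$ are linear and multiplicative, with no associativity or commutativity of $A$ needed, and each rewriting step, e.g.\ $P(u)v=\psi\bigl(P^{(\psi)}(f)\,g\bigr)$, checks out. The paper states this lemma as a standard preliminary without proof, and your direct check is precisely the routine argument it implicitly relies on.
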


Let $F_n(m) = \sum\limits_{j=1}^{m}j^n$ for natural $n,m$.
It is well known that $F_1(m) = m(m+1)/2$,
$F_2(m) = m(m+1)(2m+1)/6$, $F_3(m) = (F_1(m))^2$.
For any $n$,
\begin{equation}\label{sum-formula}
F_n(m) = \frac{1}{n+1}\sum\limits_{j=0}^n(-1)^j
 \binom{n+1}{j} B_j m^{n+1-j},
\end{equation}
where $B_0=1,B_1,\ldots,B_n$ are Bernoulli numbers.

\begin{lemma}[\cite{Unital,Miller66}]\label{lem:Faulhaber}
Let $A$ be a unital associative algebra over a field~$F$ of characteristic zero, $R$~be an RB-operator
of weight~1 on~$A$, $a = R(1)$.
Then $R(a^n) = (-1)^{n+1}F_n(-a)$ for all $n\in\mathbb{N}_{>0}$.
\end{lemma}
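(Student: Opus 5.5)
The plan is to prove the formula by induction on $n$. The RB identity with $f=1$ gives a recursion that determines $R(a^{n+1})$ uniquely from $R(a^k)$, $k\le n$. I then check that the Faulhaber polynomials satisfy the same recursion; this check is done by evaluating at negative integers, where $F_n$ becomes an honest finite sum. Write $H_n(t)=(-1)^{n+1}F_n(-t)\in F[t]$, where $F_n$ is the polynomial given by (\ref{sum-formula}). Also set $H_0(t)=t$, which agrees with $R(1)=a$. The claim is $R(a^n)=H_n(a)$.

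\emph{Step 1 (the recursion).} Put $f=1$, $g=a^n$ in (\ref{RB}) with $\lambda=1$:
$$aR(a^n)=R(a^{n+1})+R(R(a^n))+R(a^n).$$
Assume inductively that $R(a^k)=H_k(a)$ for all $k\le n$. The polynomial $H_n$ has degree $n+1$ with leading coefficient $c=1/(n+1)$, from the $j=0$ term of (\ref{sum-formula}) and the two sign factors cancelling. Hence $R(R(a^n))=R(H_n(a))=c\,R(a^{n+1})+(\text{terms } R(a^k),\ k\le n)$, and the latter are known. This gives
$$(1+c)R(a^{n+1})=aH_n(a)-H_n(a)-\sum_{k\le n}[t^k]H_n\cdot H_k(a).$$
Since $\charr F=0$, $1+c=(n+2)/(n+1)\neq0$, so $R(a^{n+1})$ is determined uniquely. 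For example, $n=0$ gives $R(a)=(a^2-a)/2=H_1(a)$.

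\emph{Step 2 (Faulhaber polynomials satisfy the recursion).} Define a linear operator $T$ on $F[t]$ by $T(t^n)=H_n$. It suffices to prove the polynomial identity
$$tT(t^n)=T(t^{n+1})+T(T(t^n))+T(t^n),$$
because applying the substitution $t\mapsto a$ and the induction hypothesis then shows that $H_{n+1}(a)$ solves the equation in Step 1. By uniqueness, $R(a^{n+1})=H_{n+1}(a)$.

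To prove the identity, evaluate at $t=-m$ for $m\in\mathbb{N}$. By definition $H_n(-m)=(-1)^{n+1}F_n(m)=-\sum_{j=1}^m(-j)^n$, so by linearity $T(p)(-m)=-\sum_{j=1}^m p(-j)$ for every $p\in F[t]$. Put $s_j=-j$. The left side equals $m\sum_j s_j^n$. The right side equals
$$\sum_j j\,s_j^n+\sum_{j=1}^m\sum_{i\le j}s_i^n-\sum_j s_j^n=\sum_i\bigl(i+(m-i+1)-1\bigr)s_i^n=m\sum_i s_i^n.$$
Both sides are polynomials agreeing at infinitely many points, and $F$ has characteristic zero, so they coincide.

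The main obstacle is Step 2: one must recognise that the polynomial identity is best checked on negative integers, where $T$ acts as a discrete summation operator. Once that reformulation is in place, the verification is the short telescoping computation above. Step 1 only needs the leading-coefficient observation that $1+1/(n+1)\ne0$.
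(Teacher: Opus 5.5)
The paper gives no proof of this lemma; it quotes it from the cited works of Miller and of the author. So there is no internal argument to compare with, and your proof is a correct, self-contained one.

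Both steps check out.
\begin{itemize}
\item The instance $f=1$ of \eqref{RB} determines $R(a^{n+1})$ from the lower values, because its coefficient $1+\frac{1}{n+1}$ is nonzero in characteristic zero.
\item The polynomial identity for $T$ follows from three facts: $T(p)(-m)=-\sum_{j=1}^m p(-j)$, the count $i+(m-i+1)-1=m$, and the fact that the points $-m$ are pairwise distinct in $F$.
\end{itemize}
Your route needs nothing about $F_n$ beyond its values at natural numbers and its leading coefficient $1/(n+1)$, so no Bernoulli-number identities enter.

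Note also that your $T$ is exactly the paper's operator $R_0$. Step~2 is therefore the case $f=1$ of the assertion that $R_0$ is a Rota--Baxter operator of weight~1, which the paper states without proof. The same evaluation at negative integers proves that assertion in full, since the map sending a sequence $(u_j)$ to $\bigl(-\sum_{j\le m}u_j\bigr)_m$ satisfies the weight-1 identity. Your argument thus follows the standard pattern of uniqueness (the recursion) plus a model $(F[x],R_0)$, and it also justifies a fact the paper takes for granted.
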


Define an RB-operator~$R_0$ of weight~1 on $F[x]$, when $\charr F = 0$, exactly by the formula from~Lemma~\ref{lem:Faulhaber}:
$R_0(1) = x$, $R_0(x^n) = (-1)^{n+1}F_n(-x)$, it gives exactly the free unital commutative Rota--Baxter algebra of weight~1.
Note that $R_0$ as well as $R_0'$ are injective.

\section{Case when $R'$ is not injective}

Assume from now on that $R$ is an injective RB-operator of weight~1 on $F[x]$. 

\begin{proposition}\label{coro:R'IsNotInjective}
Let $F$ be any field. If $R\colon F[x]\to F[x]$ is injective RB-operator of weight~1, but $R'$ is not injective, then $R=-\id$.
\end{proposition}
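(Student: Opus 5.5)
The plan is to study the kernel $K=\ker R'$, which is nonzero by hypothesis, and show that it is all of $F[x]$. On $K$ we have $R'(g)=0$, i.e. $R(g)=-g$. The first step is the standard module property. Substitute $g\in K$ into~\eqref{RB} with $\lambda=1$ and use $R(f)+f=-R'(f)$. This gives
\[
-R(f)g=R(f)R(g)=R\bigl(R(f)g+f R(g)+fg\bigr)=R(R(f)g),
\]
so $R'(R(f)g)=0$. Hence $K$ is a module over the subalgebra $B=\operatorname{Im}R$, i.e. $BK\subseteq K$. Symmetrically, $\ker R$ is a module over $\operatorname{Im}R'$, but $\ker R=0$ here.

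The second step, which I expect to be the main obstacle, is to upgrade $BK\subseteq K$ to the statement that $K$ is a nonzero ideal of $F[x]$, so that $K=(g)$ has finite codimension. I would try to show that $B$ is large by exploiting injectivity. First, $R(K)=K\subseteq B$. Next, for $0\ne g\in K$ and $a=R(1)$, all products $a^n g$ lie in $K$ and are fixed by $-R$. If $B=F$, then $R$ would be an injective linear map into a one-dimensional space, which is absurd. So $B$ contains a nonconstant polynomial $h$, and then $F[h]g\subseteq K$, so $K$ already has "bounded gaps" in degrees along the residues mod $\deg h$. I would then use~\eqref{RB} with $f\in K$ and arbitrary $g$ to produce further elements of $K$, namely those of the form $R(f)R(g)+\dots$. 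The aim is to show that $x\cdot K\subseteq K$, or at least that $F[x]/K$ is finite-dimensional, which is what the next step actually needs. If a direct argument fails, I would compare degrees and leading coefficients of $R$ on a complement of $K$, using that the map $F[x]/K\to F[x]$ induced by $R'$ is injective.

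Granting finite codimension $d=\dim F[x]/K$, the map $R'$ induces an injection $F[x]/K\hookrightarrow F[x]$ whose image $C=\operatorname{Im}R'$ is a subalgebra of dimension $d$. By Lemma~\ref{lemma:phi}(a), $R'$ is itself an RB-operator, so its image is closed under multiplication. A finite-dimensional subalgebra of $F[x]$ consists of constants, since any nonconstant polynomial generates an infinite-dimensional subalgebra. Hence $d\le1$ and $C\subseteq F$, so $R'(f)=\varphi(f)\cdot1$ for a linear functional $\varphi$ with $\ker\varphi=K$, and $R(f)=-f-\varphi(f)$.

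The last step substitutes this form into~\eqref{RB}. For $R'=\varphi(\cdot)1$ the RB identity reduces to $\varphi(f)\varphi(g)=-\varphi(\varphi(f)g+f\varphi(g)+fg)$. Taking $g=1$ forces $\varphi(1)\in\{0,-1\}$. If $\varphi(1)=-1$, then $R(1)=-1+1=0$, contradicting injectivity. So $\varphi(1)=0$, and then the identity with $g=1$ gives $\varphi(f)=-\varphi(f)-\varphi(f)\varphi(1)$, hence $\varphi\equiv0$ (using $\charr F\ne2$, and otherwise an extra check with $g=x$). Thus $R'=0$, i.e. $R=-\id$.
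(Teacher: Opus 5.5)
\textbf{The gap is your Step 2, which is the heart of the proposition.} You never prove that $F[x]/\ker R'$ is finite-dimensional; you only list strategies, and the one you favour has nothing to run on. Substituting $f\in K$ into \eqref{RB} gives $R(f)R(g)=R(fR(g))$, which is again only $K\cdot\Imm R\subseteq K$. The inclusion $F[h]g\subseteq K$ only produces elements of $K$ in degrees $\deg g+k\deg h$, and this says nothing about codimension in $F[x]$ when $\deg h>1$. Nothing you have forces $xK\subseteq K$; that inclusion holds only a posteriori, since in the end $K=F[x]$. The fallback of comparing leading coefficients on a complement is too vague to count as an argument.

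\textbf{The missing idea is to measure $K$ inside $B=\Imm R$, not inside $F[x]$.} Because $R$ is injective, the map $B\to F[x]$ given by $R(f)\mapsto R(f)+f=-R'(f)$ is a well-defined linear map. Its image is $\Imm R'$ and its kernel is $R(K)=K$, so $\Imm R'\cong B/K$. This is exactly the isomorphism $\Imm R/\ker R'\cong\Imm R'/\ker R$ that the paper invokes.

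You already showed $K\subseteq B$ and $BK\subseteq K$, so $K$ is a nonzero ideal of $B$. At this level your degree idea does work:
\begin{itemize}
\item The degrees of nonzero elements of $B$ form an additively closed set $S\subseteq\mathbb{N}$ containing a positive element, because $B$ is infinite-dimensional.
\item Hence $S$ lies in $d\mathbb{N}$ and contains all sufficiently large multiples of $d=\gcd S$.
\item For $0\neq g\in K$, the degrees of nonzero elements of $gB$ form $\deg g+S$, which misses only finitely many elements of $S$.
\item Since $\dim B/gB$ equals the number of such missed degrees, $B/gB$ is finite-dimensional. Therefore $B/K$, and hence $\Imm R'$, is finite-dimensional.
\end{itemize}
Your Step 3 then goes through as written; also $F[x]/K\cong\Imm R'$ is finite-dimensional, as you wanted.

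\textbf{Smaller slip in Step 4.} The reduced identity has the wrong sign. For $R'=\varphi(\cdot)1$, \eqref{RB} reads $\varphi(f)\varphi(g)=2\varphi(f)\varphi(g)+\varphi(fg)$, that is, $\varphi(fg)=-\varphi(f)\varphi(g)$. Setting $g=1$ immediately gives $\varphi(f)(1+\varphi(1))=0$. So either $\varphi=0$, or $\varphi(1)=-1$, which gives $R(1)=0$ and contradicts injectivity. The characteristic-$2$ caveat and the extra check with $g=x$ are unnecessary.
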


\begin{proof}
We apply the well-known facts that $\ker R$ is an ideal in $\Imm R'$ as well as $\ker R'$ is an ideal in $\Imm R$ and moreover, the following isomorphism holds:
\begin{equation} \label{iso}
\Imm R/\ker(R') \cong \Imm R'/\ker (R).
\end{equation}

If $R'$ is not injective, then the left-hand side of~\eqref{iso}
is finite-dimensional; hence $\Imm R'$ is finite-dimensional. Thus, $\Imm R' = F$ or $\Imm R' = (0)$.
The first case is impossible: writing
$R' = \varphi$,~\eqref{RB} gives
$\varphi(fg)+\varphi(f)\varphi(g)=0$.
Taking $g=1$ and using $\varphi\neq0$, we get $\varphi(1)=-1$. Hence $R(1)=0$, a~contradiction.
Therefore $\Imm R' = 0$ and $R = -\id$.
\end{proof}

\section{Case when $R'$ is also injective}

Assume now that both $R$ and $R'$ are injective. 
Define a~linear map $\sigma \colon \Imm R\to F[x]$ by
$\sigma(R(f))=R(f)+f$. 
This is well defined since $R$ is injective. 

\begin{lemma}\label{lem:injectivitySigma}
The map $\sigma$ is a homomorphism from $\Imm R$ to $F[x]$ and $\sigma$ is injective exactly when $R'$ is injective.
\end{lemma}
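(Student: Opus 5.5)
We argue directly from the Rota--Baxter identity~\eqref{RB} with $\lambda=1$, treating the two claims separately. Note first that $\sigma$ is well defined and linear, since $R$ is injective and $R(f)\mapsto f$ is a linear map $\Imm R\to F[x]$. Also, $\sigma(R(f)) = R(f)+f = -R'(f)$, so $\sigma = -R'\circ R^{-1}$ on $\Imm R$.

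\emph{Homomorphism.} First check that $\Imm R$ is a subalgebra. By~\eqref{RB},
\[
R(f)R(g) = R\bigl(R(f)g+fR(g)+fg\bigr),
\]
so the product of two elements of $\Imm R$ again lies in $\Imm R$, with preimage $R(f)g+fR(g)+fg$. Applying $\sigma$ gives
\[
\sigma(R(f)R(g)) = R(f)R(g) + R(f)g + fR(g) + fg = (R(f)+f)(R(g)+g) = \sigma(R(f))\,\sigma(R(g)).
\]
This is exactly multiplicativity, and linearity was already noted. If one wants unitality as well, it holds whenever $1\in\Imm R$, by the standard argument. The homomorphism claim only requires multiplicativity, so we do not need to address units further.

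\emph{Injectivity.} Since $R^{-1}\colon \Imm R\to F[x]$ is a bijection and $\sigma = -R'\circ R^{-1}$, we have $\ker\sigma = R(\ker R')$. Injectivity of $R$ then gives $\ker\sigma = 0$ if and only if $\ker R' = 0$.

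There is no real obstacle here. The only point requiring care is verifying that $\Imm R$ is closed under multiplication, so that the statement that $\sigma$ is a homomorphism is meaningful, and this follows immediately from~\eqref{RB}.
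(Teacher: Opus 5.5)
Your proof is correct and is essentially the paper's: the multiplicativity computation via $R(f)R(g)=R\bigl(R(f)g+fR(g)+fg\bigr)$ is the same. Your identity $\sigma=-R'\circ R^{-1}$ is just the paper's observation $\sigma(R(f))=R(f)+f=-R'(f)$, packaged so that both directions of the equivalence come out at once (the paper writes out only one).

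One caution about your unitality aside, which you rightly do not use: it is false here. If $1=R(f)\in\Imm R$, then $\sigma(1)=1+f$ is an idempotent of $F[x]$ different from $1$ (since $f\neq 0$), so $\sigma(1)=0$; for example, $R=-\id$ gives $\sigma\equiv 0$.
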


\begin{proof}
Set $w=R(f)g+fR(g)+fg$. By~\eqref{RB}, we have $R(f)R(g)=R(w)$, and hence
\begin{multline*}
\sigma(R(f)R(g)) 
 =R(w)+w =R(f)R(g)+R(f)g+fR(g)+fg \\
 =(R(f)+f)(R(g)+g)
 = \sigma(R(f))\sigma(R(g)).
\end{multline*}
If $R'(f)=0$ for some $f\neq0$, then $R(f)\neq0$ and $\sigma(R(f))=0$.
\end{proof}

Since $R'$ is injective, Lemma~\ref{lem:injectivitySigma} shows that $\sigma$ also is injective.

Put $D=\sigma-\id$. Note that $D$ is a linear bijection and $D^{-1} = R$.

\begin{lemma}\label{lem:NoConstant}
$\Imm R$ contains no nonzero constant.
\end{lemma}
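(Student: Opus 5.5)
Suppose for contradiction that $R(f)=c$ for some $f\in F[x]$ and some nonzero constant $c\in F$; rescaling $f$, we may assume $R(f)=1$. The idea is to use that $\Imm R$ is a subalgebra of $F[x]$ (by \eqref{RB}, $R(f)R(g)=R(R(f)g+fR(g)+fg)$), and that $\sigma$ is an injective homomorphism on it by Lemma~\ref{lem:injectivitySigma}. Since $1\in\Imm R$ and $1\cdot 1=1$, we get $\sigma(1)=\sigma(1)^2$. Hence $\sigma(1)$ is an idempotent of $F[x]$, so $\sigma(1)\in\{0,1\}$. Injectivity of $\sigma$ excludes $\sigma(1)=0$, so $\sigma(1)=1$. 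Then $1=\sigma(R(f))=R(f)+f=1+f$, which gives $f=0$. This contradicts $R(f)=1\neq 0$.

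The steps, in order, are as follows. First, normalise the constant to $1$ using linearity of $R$. Second, note that $1=R(f)$ lies in the domain of $\sigma$, and apply multiplicativity of $\sigma$ to the product $R(f)R(f)=1$; this uses that $\Imm R$ is closed under products, which is exactly what \eqref{RB} gives. Third, use that the only idempotents of the domain $F[x]$ are $0$ and $1$. Fourth, invoke injectivity of $\sigma$, which holds because $R'$ is injective (Lemma~\ref{lem:injectivitySigma}). Finally, unwind the definition of $\sigma$ to force $f=0$.

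There is no serious obstacle. The only delicate point is confirming that $\sigma(1)\neq 0$, and this is precisely where the standing assumption that $R'$ is injective enters. Equivalently, $\sigma(1)=0$ would mean $R'(f)=-R(f)-f=0$ with $f\neq0$, contradicting injectivity of $R'$. Without that assumption the lemma indeed fails, as $R=-\id$ shows.
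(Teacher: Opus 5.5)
Your proof is correct and follows the paper's argument: $1\in\Imm R$, so $\sigma(1)$ is an idempotent, it is nonzero by injectivity of $\sigma$, and hence $\sigma(1)=1$. The paper states the final contradiction as $D(1)=0$ against injectivity of $D=\sigma-\id$; this is exactly your step $1+f=1$, hence $f=0$.
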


\begin{proof}
If $0\neq c\in \Imm R\cap F$, then $1\in \Imm R$. The element $\sigma(1)$ is idempotent since $\sigma$ is a~homomorphism. The domain $F[x]$ has only the idempotents~0 and~1, and injectivity of~$\sigma$ excludes~0.
Thus $\sigma(1)=1$, so $D(1)=0$, contradicting injectivity of~$D$.
\end{proof}

Let $q=D^{-1}(1)$. Thus $q\in \Imm R$ and
$\sigma(q)=q+1$.
By Lemma~\ref{lem:NoConstant}, $q$~is nonconstant.

\begin{lemma}\label{lem:charF}
If both $R$ and $R'$ are injective, then $\charr F = 0$.
\end{lemma}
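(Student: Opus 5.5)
The plan is to use the Frobenius identity $(q+1)^p=q^p+1$, combined with injectivity of $D$, to force $q^p=q$. This is impossible for a nonconstant polynomial $q$.

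Suppose, for contradiction, that $\charr F=p>0$. First note that $\Imm R$ is a subalgebra of $F[x]$: by~\eqref{RB}, $R(f)R(g)=R\bigl(R(f)g+fR(g)+fg\bigr)\in\Imm R$. Hence every power $q^n$ of $q=D^{-1}(1)\in\Imm R$ lies in $\Imm R$. By Lemma~\ref{lem:injectivitySigma}, $\sigma$ is multiplicative on $\Imm R$, so $\sigma(q^n)=\sigma(q)^n=(q+1)^n$.

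Now take $n=p$. In characteristic $p$ we have $(q+1)^p=q^p+1$, so $\sigma(q^p)=q^p+1$. Since $D=\sigma-\id$ on $\Imm R$, this gives
\[
D(q^p)=\sigma(q^p)-q^p=1=D(q).
\]
By construction $D$ is injective on $\Imm R$ (its inverse is $R$), so $q^p=q$.

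It remains to see that $q^p=q$ is impossible. By Lemma~\ref{lem:NoConstant}, $q$ is nonconstant, say $\deg q=d\ge1$. Then $\deg q^p=pd>d$, since $F[x]$ is a domain, so $q^p\neq q$. This contradiction shows $\charr F=0$.

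There is no real obstacle here. The only point needing care is checking that $q^p$ lies in the domain $\Imm R$ of $\sigma$ and $D$, and that is exactly the subalgebra property noted first.
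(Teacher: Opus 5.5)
Your proof is correct and follows the paper's argument. The paper sets $m=q^p-q\in\Imm R$ and shows $D(m)=0$, which is just a linear rearrangement of your $D(q^p)=D(q)$; it then reaches the same degree contradiction, and your explicit check that $\Imm R$ is a subalgebra spells out a step the paper uses implicitly.
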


\begin{proof}
Suppose that $\charr F = p>0$. Since $q,q^p\in \Imm R$, the polynomial $m=q^p-q$ belongs to $\Imm R$. Further,
$$
\sigma(m)=(q+1)^p-(q+1)=q^p-q=m.
$$
Thus $D(m)=0$, and injectivity of $D$ gives $m=0$. This is impossible for nonconstant $q$, since
$\deg(q^p)=p\deg q>\deg q$.
Hence $\charr F = 0$.
\end{proof}

\begin{proposition} \label{prop:Main}
Let $\charr F = 0$.
Let $A\subseteq F[x]$ be a nonzero subalgebra, and let $\sigma\colon A\to F[x]$ be an injective linear homomorphism. Suppose
$D=\sigma-\id\colon A\to F[x]$ is a linear bijection.
Then there are unique $a\in F$ and $h\in F^\times$ such that
$A=(x-a)F[x]$ and $\sigma(m)(x)=m(x+h)$ for every $m\in A$.
\end{proposition}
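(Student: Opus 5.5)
The plan is to first pin down the element $q=D^{-1}(1)$, then show that $\deg q=1$, and finally read off $A$ and $\sigma$ from the subalgebra generated by $q$.

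\emph{Step 1.} As in Lemma~\ref{lem:NoConstant}, $A$ contains no nonzero constant. If $1\in A$, then $\sigma(1)$ is a nonzero idempotent of $F[x]$, hence $\sigma(1)=1$ and $D(1)=0$, contradicting the injectivity of $D$. Put $q=D^{-1}(1)\in A$. Then $\sigma(q)=q+1$, and $q$ is nonconstant. Since $\sigma$ is a homomorphism, $\sigma(q^n)=(q+1)^n$ for all $n\ge1$. Hence
\[
D(q^n)=(q+1)^n-q^n = nq^{n-1}+\dots .
\]
This is a polynomial in $q$ of degree exactly $n-1$ (here $\charr F=0$ is used). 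Consequently $D$ maps the non-unital subalgebra $B=qF[q]\subseteq A$ bijectively onto $F[q]$.

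\emph{Step 2 (main obstacle): $\deg q=1$.} Suppose $d=\deg q>1$. Then $F[q]\ne F[x]$, so by the bijectivity of $D$ the algebra $A$ is strictly larger than $B$. The element $m=D^{-1}(x)$ satisfies $\sigma(m)=m+x$. I would pass to fraction fields. Since $\sigma$ is an injective $F$-algebra map, it extends to a field embedding $\tilde\sigma\colon K=\mathrm{Frac}(A)\to F(x)$ with $\tilde\sigma(q)=q+1$, and $K\supseteq F(q,m)$. The strategy is degree bookkeeping at infinity. First I would show that $\deg\sigma(f)=\deg f$ and $\lc(\sigma(f))=\lc(f)$ for every $f\in A$. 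To get this, compare $\sigma(f)^N=\sigma(f^N)$ with $f^N+D(f^N)$: a mismatch of leading terms would make $D$ increase degree along the powers $f^N$. The identity $D(q^n)$ shows that $D$ lowers degree by exactly $d$ on $B$, and I expect to show the same on all of $A$ by comparing $D(fq^n)=\sigma(f)(q+1)^n-fq^n$ for large $n$. But then the image of $D$ consists only of polynomials whose degrees are $\deg f-d$ for $f\in A$. Moreover, $D(m)=x$ forces an element $m\in A$ of degree $d+1$. One then considers $\sigma(m^d)$ versus $\sigma(q^{d+1})$, that is, $(m+x)^d$ versus $(q+1)^{d+1}$. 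The combination $m^d-c\,q^{d+1}$ with the leading terms cancelled should yield a nonzero element of $A$ on which $D$ does not lower degree by $d$, giving a contradiction. I expect this leading-term analysis to be the delicate part. If it stalls, the fallback is Lüroth's theorem: write $K=F(t)$, so that $\tilde\sigma$ is a substitution $t\mapsto r(x)$, and combine this with $\tilde\sigma(q)=q+1$ and $\tilde\sigma(m)=m+x$ to force $F(q)=F(x)$.

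\emph{Step 3: conclusion.} With $\deg q=1$, write $q=c(x-a)$ with $c\in F^\times$ and $a\in F$, and set $h=c^{-1}$. Then $B=qF[q]=(x-a)F[x]$, and by Step~1 the map $D|_B$ is already a bijection onto $F[x]$. Since $D$ is injective on $A\supseteq B$, this forces $A=B=(x-a)F[x]$. For $m=\sum_n \alpha_n q^n\in A$ we have $\sigma(m)=\sum_n\alpha_n(q+1)^n$. Since $q(x+h)=c(x+h-a)=q(x)+1$, this is exactly $m(x+h)$. For uniqueness, $a$ is the common root of all elements of $A=(x-a)F[x]$. The number $h$ is then determined by $\sigma(x-a)=x-a+h$.
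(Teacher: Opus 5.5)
Your Steps~1 and~3 are correct and match the opening and closing parts of the paper's proof. The gap is Step~2, which is the whole content of the proposition: it is not proved, and your main route cannot work. Its ingredients are unproved: the claims $\deg\sigma(f)=\deg f$, $\lc(\sigma(f))=\lc(f)$ and ``$D$ lowers degrees by exactly $d$'' are not established, and $D$ raising or preserving degrees is not contradictory in itself. More seriously, the leading-term comparison yields no contradiction. With $c=\lc(m)^d/\lc(q)^{d+1}$, the $x^{d^2}$-coefficient of $D(m^d-cq^{d+1})=(m+x)^d-m^d-c\bigl((q+1)^{d+1}-q^{d+1}\bigr)$ is $\lc(m)^{d-1}\bigl(d-(d+1)\lc(m)/\lc(q)\bigr)$. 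So your degree claim merely pins down $\lc(m)=\frac{d}{d+1}\lc(q)$.

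This is no accident. For $d\geq2$, Hensel's lemma in $F((x^{-1}))$ gives $w=x+\frac{1}{d\,\lc(q)}x^{1-d}+\dots$ with $q(w)=q+1$. The substitution $f\mapsto f(w)$ sends $q$ to $q+1$ and preserves degrees and leading coefficients. It satisfies $\deg\bigl(f(w)-f\bigr)=\deg f-d$ for every nonconstant polynomial $f$, since the top term is $f'(x)(w-x)$. It also predicts the same value of $\lc(m)$ from $D(m)=x$. So everything visible at $x=\infty$ is consistent with $d\geq2$, and the contradiction must come from global information. The Lüroth fallback does not supply it. Lüroth only gives $\mathrm{Frac}(A)=F(t)$, and the single relation $x=\tilde\sigma(m)-m$ only puts $x$ in the compositum of $\mathrm{Frac}(A)$ and its image. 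You give no mechanism forcing $\mathrm{Frac}(A)=F(x)$, which is the real issue.

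The missing idea is to use surjectivity of $D$ on \emph{all} of $F[x]$ as a dimension count over $F(q)$. Put $E=\mathrm{Frac}(A)$; it is the $F(q)$-span of $A$, and $\tilde\sigma$ is semilinear with respect to $q\mapsto q+1$. From $A+\sigma(A)=F[x]$ you get $F(x)=E+\tilde\sigma(E)$ as $F(q)$-vector spaces. Both $E$ and $\tilde\sigma(E)$ are intermediate fields of $F(x)/F(q)$ of the same degree $r$, and their intersection contains $F(q)$. Hence $d=[F(x):F(q)]\leq 2r-1$, and since $r$ divides $d$, this forces $E=F(x)$. Then $\tilde\sigma$ is the substitution $x\mapsto w$ with $w\in F(x)$ and $q(w)=q+1$. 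Since $q(w)$ is a polynomial, $w$ has no finite poles, so $w\in F[x]$. Comparing degrees gives $\deg w=1$, and a fixed point rules out $w=\alpha x+\beta$ with $\alpha\neq1$. Finally, $q(x+\beta)-q(x)=1$ has left-hand side of degree $d-1$ in characteristic zero, so $d=1$. Your Step~3 then finishes the proof.
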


\begin{proof}
By Lemma~\ref{lem:NoConstant}, $A$ contains no nonzero constant. Let $q=D^{-1}(1)$. Then $q$ is nonconstant and
$\sigma(q)=q+1$. Put
$$
K=F(q),\quad L=F(x),\quad B=F[q].
$$
The extension $L/K$ is finite: if $q=q(x)$ has degree $d\geq1$, then $x$ is a root of $q(T)-q\in K[T]$.

The algebra $A$ is a $B$-module.
Let $A_K$ be the $K$-linear span of $A$ inside $L$. It is a~finite-dimensional $K$-subalgebra of the field $L$. It contains~1, since $q\in A$ and $1=q^{-1}q$. It is known that a~finite-dimensional domain over a field is a field. Hence
$K\subseteq A_K\subseteq L$ is a tower of fields.

Let $\tau \colon K\to K$ be the automorphism determined by $\tau(q)=q+1$. Semilinearity extends $\sigma$ to a $\tau$-semilinear field embedding $\sigma_K \colon A_K\to L$.
Explicitly,
$$
\sigma_K\Big(\sum_i k_i a_i\Big)=\sum_i \tau(k_i)\sigma(a_i).
$$
This is well defined: after clearing denominators, a $K$-linear relation among the $a_i$ becomes a relation $\sum_i g_i(q)a_i=0$ inside $A$; applying $\sigma$ and using semilinearity gives the required twisted relation. The image $\sigma_K(A_K)$ is another intermediate field, and
$[\sigma_K(A_K):K]=[A_K:K]$.

Surjectivity of $D$ gives
$A+\sigma(A)=F[x]$, since every $f$ has the form $f=\sigma(a)-a$. Taking $K$-linear spans inside $L$ yields
$A_K+\sigma_K(A_K)=L$.
Write $n=[L:K]$, $r=[A_K:K]$.
The last equality gives $n\leq2r$, while the tower law gives $n=[L:A_K]r$. Hence $[L:A_K]\leq2$. If $[L:A_K]=2$, then $n=2r$ and the dimension inequality is an equality. It would follow that $A_K\cap\sigma_K(A_K)=0$. This is impossible because both fields contain~$K$. Therefore
$A_K=L$.

Thus $\sigma_K$ is an $F$-algebra endomorphism of $F(x)$ satisfying $\sigma_K(q)=q+1$. Put $w=\sigma_K(x)\in F(x)$. Since an algebra endomorphism acts on rational functions by substitution, $q(w)=q+1$.

Write $w=P/Q$ with coprime $P,Q\in F[x]$. If $Q$ were nonconstant, then the denominator $Q^{\deg q}$ in $q(P/Q)$ could not cancel: modulo $Q$, the numerator is the leading coefficient of $q$ times $P^{\deg q}$, which is coprime to $Q$. Since $q(w)=q+1$ is a polynomial, $Q$ must be constant. Hence $w\in F[x]$.

Degree comparison gives
$$
\deg q \cdot \deg w
 = \deg(q(w))
 = \deg(q+1)
 = \deg q,
$$
so $\deg w=1$. Write $w=\alpha x+\beta$ with $\alpha\neq0$. If $\alpha\neq1$, then $w$ has the fixed point $x_0=\beta/(1-\alpha)$, and evaluation of $q(w)=q+1$ at $x_0$ gives the contradiction $q(x_0)=q(x_0)+1$.
Therefore $\alpha=1$. Also $\beta\neq0$. We now have
$q(x+\beta)-q(x)=1$.

If $\deg q=d\geq1$, the left-hand side has degree $d-1$ and leading coefficient $d\beta\,\lc(q)$; this is nonzero in characteristic zero. Since the right-hand side has degree zero, $d=1$.

Write $q=b(x-a)$ with $b\neq0$, and put $h=b^{-1}$.
By linearity of $q$, we have $h = \beta$.
Then $q(x+h)=q(x)+1$.

For every $n\geq0$, the element $q^{n+1}$ belongs to $A$ and
$$
D(q^{n+1})=(q+1)^{n+1}-q^{n+1}.
$$
As a polynomial in $q$, the right-hand side has degree $n$ and leading coefficient $n+1$. Characteristic zero therefore implies, by triangular induction, that
$D\colon qF[q]\longrightarrow F[q]$ is bijective.
Since $q$ is affine, $F[q]=F[x]$. Both this restriction and $D\colon A\to F[x]$ are bijective. If $m\in A$, choose $r\in qF[q] \subseteq A$ with $D(r)=D(m)$; injectivity gives $m=r$. Hence
$A=qF[q]=(x-a)F[x]$.

On $qF[q]$, multiplicativity gives $\sigma(g(q))=g(q+1)$ for every polynomial $g$ with zero constant term. Since translation by $h$ also sends $q$ to $q+1$, we conclude that
$\sigma(m)(x)=m(x+h)$ on all of~$A$.

Finally, the ideal $(x-a)F[x]$ determines $a$: every member vanishes at $a$, and the generator $x-a$ has no other root. Once $a$ is known, applying the translation formula to $x-a$ determines $h$. Thus the pair $(a,h)$ is unique.
\end{proof}

Recall that every automorphism of $F[x]$ is affine:
$x\to \alpha x+\beta$, where $\alpha\in F^\times$, $\beta \in F$.

\begin{theorem}
Let $F$ be a field and let $R\colon F[x]\to F[x]$ be an injective Rota--Baxter operator of weight~1.

a) If $\charr F = p>0$, then $R=-\id$.

b) If $\charr F = 0$, then either $R=-\id$ or, up to conjugation with automorphisms of~$F[x]$, $R=R_0$.
\end{theorem}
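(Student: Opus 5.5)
The plan is to split according to whether $R'=-R-\id$ is injective, and to assemble results already proved.

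\emph{Case 1: $R'$ is not injective.} Proposition~\ref{coro:R'IsNotInjective} applies over any field and gives $R=-\id$.

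\emph{Case 2: $R'$ is injective.} By Lemma~\ref{lem:charF} this forces $\charr F=0$. This settles part~a): in positive characteristic only Case~1 can occur, so $R=-\id$. For part~b) it remains to treat Case~2 when $\charr F=0$.

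Set $A=\Imm R$. This is a nonzero subalgebra of $F[x]$, since by~\eqref{RB} the product $R(f)R(g)$ lies in $\Imm R$. By Lemma~\ref{lem:injectivitySigma}, $\sigma$ is an injective homomorphism $A\to F[x]$. The map $D=\sigma-\id$ is a linear bijection $A\to F[x]$ with $D^{-1}=R$. Proposition~\ref{prop:Main} therefore yields $a\in F$ and $h\in F^\times$ with
\[
A=(x-a)F[x], \qquad \sigma(m)(x)=m(x+h) \quad (m\in A).
\]
Consequently $R$ is determined as the inverse of $D\colon m\mapsto m(x+h)-m(x)$ on $(x-a)F[x]$. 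In particular $R(1)=q$ is the unique element of $(x-a)F[x]$ with $q(x+h)-q(x)=1$, namely $q=h^{-1}(x-a)$.

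Next we conjugate. Take the automorphism $\psi\colon x\mapsto hx+a$, so that $\psi^{-1}(x)=h^{-1}(x-a)=q$. By Lemma~\ref{lem:Aut}, $P=R^{(\psi^{-1})}=\psi R\psi^{-1}$ is again an injective RB-operator of weight~1, and so is its $P'$. Its value at $1$ is
\[
P(1)=\psi(R(1))=\psi(q)=x .
\]

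Finally we show $P=R_0$. Lemma~\ref{lem:Faulhaber} gives $P(x^n)=(-1)^{n+1}F_n(-x)=R_0(x^n)$ for all $n\ge1$, and $P(1)=x=R_0(1)$ as well. Since $\{x^n\}_{n\ge0}$ spans $F[x]$, we get $P=R_0$, i.e.\ $R$ is conjugate to $R_0$.

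The only step that needs real care is this conjugation bookkeeping: $\psi$ must be chosen so that the transformed operator sends $1$ to $x$ exactly. Once $R(1)=x$ holds, Lemma~\ref{lem:Faulhaber} determines the operator completely, so no appeal to the explicit translation formula is needed. For completeness, $R_0$ does occur, since the paper already notes that $R_0$ and $R_0'$ are injective.
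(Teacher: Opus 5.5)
Your proposal is correct and follows the paper's proof essentially step by step: you split on whether $R'$ is injective, use Lemma~\ref{lem:charF} for part~a), apply Proposition~\ref{prop:Main} to $A=\Imm R$ to obtain $R(1)=h^{-1}(x-a)$, conjugate affinely so that $R(1)=x$, and conclude with Lemma~\ref{lem:Faulhaber}. Your conjugation by $\psi\colon x\mapsto hx+a$ is the paper's conjugation by $\varphi=\psi^{-1}$ written in the other direction, and your check that $P(1)=x$ is correct.
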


\begin{proof}
If $R'$ is not injective, Proposition~\ref{coro:R'IsNotInjective} gives $R=-\id$ over every field.

If $R'$ is injective, Lemma~\ref{lem:charF} shows that $F$ has characteristic zero.

Now, we apply Proposition~\ref{prop:Main}
for $A = \Imm R$ and the map $\sigma$.
Thus, we obtain unique $a\in F$ and $h\neq0$ such that
$\sigma(x-a) = x-a + h$, hence,
$R(1) = (x-a)/h$.
Using conjugation with $\varphi(x) = (x-a)/h$ (see Lemma~\ref{lem:Aut}), we may assume that
$R(1) = x$.
By Lemma~\ref{lem:Faulhaber}, we have
$R(x^n) = (-1)^{n+1}F_n(-x)$ for all $n\geq1$.
Hence, $R = R_0$.
\end{proof}

\section*{Acknowledgements}

The research was carried out within the framework of the Sobolev
Institute of Mathematics state contract (project FWNF-2026-0017).

\noindent Vsevolod Gubarev \\
Sobolev Institute of Mathematics \\
Acad. Koptyug ave. 4, 630090 Novosibirsk, Russia \\
Novosibirsk State University \\
Pirogova str. 1, 630090 Novosibirsk, Russia \\
e-mail: wsewolod89@gmail.com

\end{document}